\newtheorem{theorem}{Theorem}[section]
\numberwithin{figure}{section}
\numberwithin{equation}{section}
\author{Enzhi Li  \thanks{enzhililsu@gmail.com}}
\affil{Suning R \& D Center, Palo Alto, USA}
\title{Integral Equation Approach to Stationary Stochastic Counting Process with Independent Increments}
\date{\today}
\begin{document}

\maketitle

\noindent{\bf Abstract: }
Stationary stochastic processes with independent increments, of which the Poisson process is a prominent example, are widely used to describe real world events. With the basic assumption that a counting process is stationary and has independent increments, here I derive two integral equations to capture the time evolution of any such process. In order to solve these two integral equations explicitly, I need to introduce one more restriction condition. For sake of simplicity, I have imposed the Poisson condition on the two equations and successfully reproduced the renown Poisson results. The methods proposed here may also be applicable for investigating other stationary processes with independent increments. 
\vskip0.3cm

\noindent{\bf Keywords: }{Integral equation; Stochastic process; Poisson process}

\section{Introduction}
Stationary stochastic counting process with independent increments is a widely used mathematical model that could capture some essence of the real-world happenings\cite{ross1996stochastic}. By definition, a counting process counts the number of events that occur during a time interval. For a counting process to be stationary, the probability of observing $n$ events during time interval $[t_1, t_2], t_2 \ge t_1 \ge 0$ depends only on the time difference $t_2 - t_1$, rather than on $t_1$ and $t_2$ separately, i.e., $p([t_1, t_2]_{n}) = f_{n}(t_2 - t_1)$. Here, we have introduced the notation $[t_1, t_2]_{n}$, which denotes the event that $n$ occurrences are observed during time interval $[t_1, t_2]$. A stationary process has independent increments if and only if the event $[t ,t + s]_{m}$ is independent of the event $[0, t]_{n}$, i.e., $p([0, t]_{n}, [t, t+s]_{m}) = p([0, t]_{n}) p([t, t+s]_{m}), m \ge 0, n \ge 0$. 

In this paper, I am going to derive two integral equations for any stationary stochastic counting process with independent increments. The first equation calculates the probability of observing $n$ events during time interval $[0, t]$, and the second equation calculates the probability density function of observing $n$ events during the time interval $[0, t]$. To solve these two equations, I need to introduce one more restriction condition. The introduction of this extra restriction specifies the type of counting process we are studying. For sake of simplicity, I will introduce restrictions that apply to Poisson process, and use the two integral equations derived here to reproduce the renown Poisson process\cite{ross1996stochastic}. The integral equations derived here are applicable to any stochastic counting process as long as it is stationary and has independent increments.

The organization of this paper is as follows. In section \ref{probability_function}, I will use the iterative principle to derive an integral equation for calculating the probability of observing $n$ events during the time interval $[0, t $. Solution of this integral equation requires the introduction of a restriction condition. I will introduce a restriction that is well motivated to derive the Poisson process. The equivalence of my restriction condition to the Poisson condition will be established in the same section. In section \ref{probability_density_function}, I will again use the iterative principle to derive an integral equation for the probability density function. Similarly, I need to introduce restriction condition to explicitly solve this integral equation. What distinguishes section \ref{probability_density_function} from section \ref{probability_function} is that the restriction condition in section \ref{probability_density_function} is more directly related with Poisson condition. Finally, I make a conclusion in section \ref{conclusion}. 

\section{An iterative method for deriving $p([0, t]_{n})$}
\label{probability_function}
In this section, I am going to derive an integral equation for $p([0, t]_{n})$, which is the probability of observing $n$ events during time interval $[0, t]$. Next, I will give an explicit solution to this equation by introducing one more assumption other than the assumption of stationarity and independent increments for the process. 

\subsection{Derivation of an integral equation for probability}
In order to establish an iterative integral equation, I will introduce a cutting procedure for the time interval $[0, t]$, which is
\begin{tcolorbox}
1. Select from the time interval $[0, t]$ a random number $\tau$ satisfying the condition that $0 \le \tau \le t$, and obtain two disjoin sub-time-intervals $[0, \tau]$ and $[\tau, t]$; \\
2. Demand that during time interval $[0, \tau]$, $k$ events occur, and during time interval $[\tau, t]$, $n - k$ events occur, where $0 \le k \le n$; \\
3. Let $\tau$ run the whole range $[0, t]$ and $k$ run the whole set $\{0, 1, 2, ..., n\}$. \\
\end{tcolorbox}

Every cutting involves three independent events, which are:
\begin{align}\nonumber
A &= [0, \tau]_{k}, \text{ which represents the occurrence of $k$ events during the time interval } [0, \tau];  \\\nonumber
B &=  [\tau, t]_{n - k}, \text{ which represents the occurrence of $n - k$ events during the time interval } [\tau, t]; \\\nonumber
C &=  \{\text{Selection of a random time $\tau$ from within the time interval } [0, t]\} \\\nonumber
\end{align}
Every cutting can be characterized by the time $\tau$ and the number $k$, and thus can be denoted as $U_{\tau}^{k} = [0, \tau]_{k} \cap [\tau, t]_{n - k}\cap \{\text{selection of time } \tau\}$. Due to the mutual independence of these three constituent sub-events, the probability of observing event $U_{\tau}^{k}$ is equal to 
\begin{eqnarray}
p(U_{\tau}^{k}) = p([0, \tau]_{k}) p([\tau, t]_{n - k}) p(\{\text{selection of time } \tau\})
\end{eqnarray}
The cutting events in the set $\{U_{\tau}^{k}\vert \tau \in [0, t], k = 0, 1, 2, ..., n\}$ are mutually exclusive due to the fact that the selection of time $\tau_1$ is exclusive to the selection of time $\tau_2$ as long as $\tau_1 \ne \tau_2$. Unfortunately, the selection of time $\tau$ occurs with probability 0, because a single point on the real axis has a zero measure. In order to avoid this, I need to first discretize the time interval $[0, t]$ into $N$ slices, each of which possesses a length $\Delta t = \frac{t}{N}$. Via this discretization procedure, we can put the cutting times $\tau$ on a lattice, and the selection of this discretized time $\tau$ gives us a finite probability. After discretization, the cutting procedure becomes 
\begin{tcolorbox}
1. Select at random from the set $\{1, 2, .., N-1\}$ a number $i$, and obtain two disjoin sub-time-intervals $[0, i\Delta t]$ and $[i\Delta t, t]$; \\
2. Demand that during time interval $[0, i\Delta t]$, $k$ events occur, and during time interval $[i\Delta t, t]$, $n - k$ events occur, where $0 \le k \le n$; \\
3. Let $i$ run over the whole set $\{1, 2, .., N-1\}$ and $k$ run over the whole set $\{0, 1, 2, ..., n\}$. \\
\end{tcolorbox}
Now every cutting event $U_{i}^{k} = [0, i\Delta t]_{k} \cap [i\Delta t, t]_{n - k}\cap \{\text{selection of index } i \}$ occurs with probability 
\begin{eqnarray}
p(U_{i}^{k}) &=& p([0, i\Delta t]_{k}) p([i\Delta t, t]_{n - k}) \frac{1}{N-1} \\\nonumber
&\approx& p([0, i\Delta t]_{k}) p([i\Delta t, t]_{n - k}) \frac{\Delta t}{t}
\end{eqnarray}
Since all the cutting events with distinct $i, k$ are mutually exclusive, the probability of observing $n$ events during time interval $[0, t]$ is the summation of all possible occurrence probabilities, that is, 
\begin{eqnarray}
p([0, t]_{n}) \approx \sum_{k = 0}^{n} \sum_{i = 1}^{N - 1} p([0, i\Delta t]_{k}) p([i\Delta t, t]_{n - k}) \frac{\Delta t}{t}
\end{eqnarray}
Setting $N \rightarrow \infty$, we have (remember that $f_{n}(t) = p([0, t]_{n})$)
\begin{eqnarray}
f_{n}(t) = \frac{1}{t}\sum_{k = 0}^{n}\int_{0}^{t} f_{k}(\tau) f_{n - k}(t - \tau) d\tau, n \ge 0
\label{probability_equation}
\end{eqnarray}
The above equation calculates the final probability by considering all possible values of $k$ and $\tau$, and can thus be considered to be a path integral over all past history. This method is analogous to the path integral formulation of quantum mechanics as proposed by Feynman\cite{feynman1948space, feynman2010quantum}. Because of this, I call this method the path integral approach to counting process. However, there are still two salient differences between the path integral formulated by Feynman and the path integral proposed here: 
\begin{itemize}
\item The path integral formulated by Feynman integrates over all possible paths, and each path is assigned a weight which is proportional to $e^{i\frac{S}{\hbar}}$, where $S = \int_{t_{i}}^{t_{f}} L dt$ is the Lagrangian action corresponding to that path. The path integral proposed here also integrates over all possible paths. However, an egalitarian principle is imposed and all the paths have equal weights.  
\item The path integral method as proposed here is applicable only to the stochastic counting processes that are stationary and possess independent increments. This is a strong restriction condition, and thus limit the applicability of this method. On the other hand, the path integral formulation as proposed by Feynman has a much wider application in the field of physics in that only fundamental principles of quantum mechanics are required for that method to be valid. I hope I can generalize the path integral approach proposed here in the foreseeable future. 
\end{itemize}
I have arrived at the above integral equation only from the assumption that the stochastic process is stationary and has independent increments. Explicit solution of this equation requires some extra assumptions. Next, I am going to introduce another assumption to give an explicit solution to the above equation. 

\subsection{A solution to the integral equation for probability}
Obviously, Equ. [\ref{probability_equation}] has a convolution form, and thus we can perform a Laplace transformation on this equation to solve it. Laplace transformation of this equation gives 
\begin{eqnarray}
-\frac{d}{dp}\hat{f}_{n}(p) = \sum_{k = 0}^{n} \hat{f}_{k}(p)\hat{f}_{n - k}(p), n \ge 0, 
\end{eqnarray}
where $\hat{f}_{n}(p) = \int_{0}^{\infty} f_{n}(t) e^{-pt}dt$ is the Laplace transformation of $f_{n}(t)$. 
Assume that $\int_{0}^{\infty} f_{n}(t) dt = \lambda^{-1}$ for $n \ge 0$. This assumption is equivalent to the condition that $\hat{f}_{n}(0) = \frac{1}{\lambda}, n \ge 0$. Later, I will show that this assumption is equivalent to the renown Poisson condition, which states that for a time interval that is short enough, the probability of observing one event is proportional to the length of the time interval, and no two events can occur simultaneously. 

I am going to use mathematical induction to prove the following theorem. 
\begin{theorem}
\begin{eqnarray}
\hat{f}_{n}(p) = \frac{\lambda^n}{(p + \lambda)^{n + 1}}, n \ge 0
\end{eqnarray}
\end{theorem}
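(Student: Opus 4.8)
The plan is to prove the closed form by strong induction on $n$, exploiting the fact that the Laplace-transformed relation $-\frac{d}{dp}\hat{f}_n(p) = \sum_{k=0}^n \hat{f}_k(p)\hat{f}_{n-k}(p)$ is, for each fixed $n$, a first-order ordinary differential equation in the variable $p$, supplemented by the single normalization condition $\hat{f}_n(0) = 1/\lambda$ coming from the assumption $\int_0^\infty f_n(t)\,dt = \lambda^{-1}$. The guiding observation is that although the right-hand side looks quadratic in the unknowns, only two of its terms (the $k=0$ and $k=n$ terms) actually involve $\hat{f}_n$ itself; every remaining term involves $\hat{f}_k$ with $k<n$, which the induction hypothesis has already determined. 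Hence each inductive step collapses to a \emph{linear} ODE that can be integrated in closed form.

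For the base case $n=0$, the equation becomes the separable (Riccati-type) equation $-\hat{f}_0'(p) = \hat{f}_0(p)^2$. Integrating gives $\hat{f}_0(p) = 1/(p+C)$, and imposing $\hat{f}_0(0) = 1/\lambda$ fixes $C=\lambda$, so $\hat{f}_0(p) = \lambda^0/(p+\lambda)^1$, in agreement with the claimed formula.

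For the inductive step, I would assume $\hat{f}_k(p) = \lambda^k/(p+\lambda)^{k+1}$ for every $k<n$. Splitting off the $k=0$ and $k=n$ terms and substituting $\hat{f}_0(p) = 1/(p+\lambda)$, the equation can be rewritten as
\begin{equation}
\frac{d}{dp}\hat{f}_n(p) + \frac{2}{p+\lambda}\hat{f}_n(p) = -\sum_{k=1}^{n-1} \hat{f}_k(p)\hat{f}_{n-k}(p).
\end{equation}
By the induction hypothesis each product in the sum equals $\lambda^n/(p+\lambda)^{n+2}$, so the right-hand side collapses to $-(n-1)\lambda^n/(p+\lambda)^{n+2}$. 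This is a linear first-order ODE with integrating factor $(p+\lambda)^2$; multiplying through recognizes the left side as $\frac{d}{dp}\bigl[(p+\lambda)^2\hat{f}_n(p)\bigr]$, and a single integration yields $\hat{f}_n(p) = \lambda^n(p+\lambda)^{-n-1} + C(p+\lambda)^{-2}$. Imposing $\hat{f}_n(0)=1/\lambda$ forces $C=0$, completing the induction. The case $n=1$ is subsumed by the same computation: the interior sum is empty, the equation is homogeneous, and the normalization again returns the stated answer.

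The step I expect to be most delicate is the bookkeeping involved in isolating the $\hat{f}_n$-dependence: one must notice that the diagonal $k=0$ and $k=n$ contributions combine into the coefficient $2/(p+\lambda)$ of the linear term, and verify that the remaining $(n-1)$ interior products all share the common value $\lambda^n/(p+\lambda)^{n+2}$. A secondary point worth checking is that the normalization at $p=0$ is precisely the right amount of data to kill the spurious homogeneous solution $(p+\lambda)^{-2}$; equivalently, one could argue this term must vanish because $\hat{f}_n(p)$ should decay like $p^{-(n+1)}$ as $p\to\infty$, but invoking the given condition $\hat{f}_n(0)=1/\lambda$ is cleaner and keeps the proof self-contained.
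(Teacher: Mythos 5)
Your proof is correct and takes essentially the same route as the paper: strong induction on $n$, isolating the $k=0$ and $k=n$ terms to reduce the quadratic relation to the linear ODE $\frac{d}{dp}\bigl[(p+\lambda)^2\hat{f}_n(p)\bigr] = -(n-1)\lambda^n(p+\lambda)^{-n}$, then integrating and imposing $\hat{f}_n(0)=1/\lambda$. The only cosmetic difference is that the paper handles the interior-sum bookkeeping by splitting into even and odd $n$ and pairing $k$ with $n-k$, whereas you count the $n-1$ interior terms directly, which is slightly cleaner.
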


\begin{proof}
When $n = 0$, we have 
\begin{eqnarray}
-\frac{d}{dp}\hat{f}_{0}(p) = \hat{f}^2_{0}(p)
\end{eqnarray}
From condition that $\hat{f}_{0}(0) = \lambda^{-1}$, we have 
\begin{eqnarray}
\hat{f}_{0}(p) = \frac{1}{p + \lambda}
\end{eqnarray}
For an arbitrary $n \ge 1$, assume that 
\begin{eqnarray}
\hat{f}_{k}(p) = \frac{\lambda^{k}}{(p + \lambda)^{k+1}}, 0 \le k \le n - 1
\end{eqnarray}
Plug this into Equ. [\ref{probability_equation}], we have (for $n$ that is even):
\begin{align}
 -\frac{d}{dp}\hat{f}_{n}(p) &= 2\sum_{k= 0}^{\frac{n}{2} - 1} \hat{f}_{k} (p)\hat{f}_{n - k}(p) + \hat{f}_{\frac{n}{2}}^2(p) \\\nonumber
  &= 2\hat{f}_{0}(p)\hat{f}_{n}(p) + 2\sum_{k= 1}^{\frac{n}{2} - 1} \hat{f}_{k} (p)\hat{f}_{n - k}(p) + \hat{f}_{\frac{n}{2}}^2(p) \\\nonumber
   &= 2\hat{f}_{0}(p)\hat{f}_{n}(p) + (n-1)\frac{\lambda^n}{(p + \lambda)^{n+2}}
\end{align}
and for $n$ that is odd, we have:
\begin{align} 
-\frac{d}{dp}\hat{f}_{n}(p) &= 2\sum_{k= 0}^{\frac{n - 1}{2}} \hat{f}_{k} (p)\hat{f}_{n - k}(p) \\\nonumber
 &= 2\hat{f}_{0}(p)\hat{f}_{n}(p) +2\sum_{k= 1}^{\frac{n - 1}{2}} \hat{f}_{k} (p)\hat{f}_{n - k}(p) \\\nonumber
  &= 2\hat{f}_{0}(p)\hat{f}_{n}(p) + (n-1)\frac{\lambda^n}{(p + \lambda)^{n+2}}
 \end{align}
Therefore, for any $n \ge 1$, we have 
\begin{eqnarray}
-\frac{d}{dp}\hat{f}_{n}(p) = 2\hat{f}_{0}(p)\hat{f}_{n}(p) + (n-1)\frac{\lambda^n}{(p + \lambda)^{n+2}}
\end{eqnarray}
The above equation can be equivalently transformed into the integrable form: 
\begin{eqnarray}
\frac{d}{dp}\Big( (p + \lambda)^2 \hat{f}_{n}(p)\Big) = -(n - 1)\frac{\lambda^n}{(p + \lambda)^{n}}
\end{eqnarray}
Integration of the above equation yields (remember that $\hat{f}_{n}(0) = \lambda^{-1}$)
\begin{eqnarray}
\hat{f}_{n}(p) = \frac{\lambda^{n}}{(p + \lambda)^{n+1}}
\end{eqnarray}
\end{proof}
Up to now, I have completed the proof of the above theorem. The probability function can be obtained by inverting the Laplace transform, and finally we have 
\begin{eqnarray}
f_{n}(t) &=& \frac{1}{2\pi i}\oint \frac{\lambda^{n}}{(p + \lambda)^{n+1}} e^{pt} dp \\\nonumber
&=& \frac{(\lambda t)^n}{n!} e^{-\lambda t}
\end{eqnarray}
Therefore, we have arrived at the Poisson process. 

\subsection{Equivalence of Poisson condition and the condition that $\hat{f}_{n}(0) = \lambda^{-1}, n \ge 0$}
For any stationary stochastic counting process with independent increments, $f_{n}(t)$ gives the probability of observing $n$ events during time interval $[0, t]$, with $n \ge 0$. This probability can also be interpreted in this way: what is the probability of waiting for a period of $t$ until all of the $n$ events have occurred? Once all the events have occurred, the process is considered to have terminated. The probability for the $n^{\text{th}}$ event to occur (or the process terminated) exactly at time $t$ is 0, since the set $\{ \text{time} = t\}$ has zero measure. However, the probability of observing the occurrence of the $n^{\text{th}}$ event in the short time interval $[t, t + \Delta t]$ is finite, and is equal to 
\begin{eqnarray}
p(t) = f_{n-1}(t)f_{1}(\Delta t)
\end{eqnarray}
Here, we have already implicitly invoked the assumption that the counting process is stationary and has independent increments, since for the process to terminate in the time interval $[t, t + \Delta t]$, we must have already observed the occurrence of $n - 1$ events before time $t$, which gives the factor $f_{n-1}(t)$, and we must also observe the occurrence of one more event during the time interval $[t, t + \Delta t]$, which gives the factor $f_{1}(\Delta t)$. Assume that the probability of observing one event during a short time period of length $\Delta t$ is equal to $\lambda \Delta t$(this is precisely the Poisson condition), we can convert the above probability into the form
\begin{eqnarray}
p(t) = f_{n - 1}(t) \lambda \Delta t
\end{eqnarray}
The termination of this counting process must occur at some time, and the termination of the process at time $t_1$ is exclusive to the termination of the process at another time $t_2$, we thus have the following integration: 
\begin{eqnarray}
\int_{0}^{\infty} f_{n - 1}(t) \lambda dt = 1, n \ge 1
\end{eqnarray}
When $n = 0$, $f_{n}(t)$ gives the probability of waiting for a time period of length $t$ until seeing an event occur. The average waiting time is given by $\int_{0}^{\infty} f_{0}(t) dt$. For a stationary stochastic process, the average waiting time between any two events should be a constant. Therefore, we have arrived at this condition: 
\begin{eqnarray}
\int_{0}^{\infty} f_{n}(t) dt = \lambda^{-1}, n \ge 0. 
\end{eqnarray}
This is just the condition we have employed to solve the integral equation [\ref{probability_equation}]. 

\section{An iterative method for deriving probability density function}
\label{probability_density_function}
In the previous section, I have derived and solved an integral equation for the probability function. In this section, I am going to derive and solve an integral equation for probability density function. I will still derive a general integral function that is valid for any stationary stochastic counting process with independent increments, and then specifies one more restriction condition to find an explicit solution to the general integral equation. 

\subsection{Derivation of a general integral equation for probability density function}
Let $N_t$ be a random variable that counts the number of events during the time interval $[0, t]$. First, we have the identity that 
\begin{align}
N_{t} = N_{t} + N_{t-\tau} - N_{t - \tau}, \tau \in [0, t]
\end{align}
Since we have assumed that the process is stationary, we thus have $N_{\tau} = N_{t} - N_{t - \tau}$. As a result, the above identity can be equivalently converted to 
\begin{eqnarray}
N_{t} = N_{t - \tau} + N_{\tau}
\end{eqnarray}
We have also assumed that the process has independent increments, which means $N_{\tau}$ and $N_{t - \tau}$ are independent. Denote the probability density function for the random variable $N_{t}$ as $h_{t}(x)$, we have a convolution equation: 
\begin{eqnarray}
h_{t}(x) = \int_{0}^{x} h_{\tau}(\xi) h_{t - \tau}(x - \xi) d\xi
\label{density_convolution}
\end{eqnarray}
Laplace transformation of the convolution yields 
\begin{eqnarray}
\hat{h}_{t}(p) = \hat{h}_{\tau}(p) \hat{h}_{t - \tau}(p)
\end{eqnarray}
Next, I consider the above equation as a functional equation for $g(t; p) = \hat{h}_{t}(p)$, with $t$ as the variable and $p$ as the parameter, and I have the following equation: 
\begin{eqnarray}
g(t; p) = g(\tau; p) g(t - \tau; p)
\label{functional}
\end{eqnarray}
Setting $\tau = 0$, we have 
\begin{eqnarray}
g(t; p) = g(0; p) g(t; p)
\end{eqnarray}
In order for Equ. [\ref{functional}] to have a nontrivial solution, we must have $g(0; p) = 1$. Taking the derivative of Equ. [\ref{functional}] with respect to $\tau$, and then setting $\tau = 0$, we have 
\begin{eqnarray}
0 = g^{\prime}(0; p) g(t; p) - g(0; p) g^{\prime}(t; p)
\end{eqnarray}
Or equivalently (we have used the fact that $g(0; p) = 1$), 
\begin{eqnarray}
\frac{dg(t; p)}{dt} = g^{\prime}(0; p) g(t; p)
\end{eqnarray}
Solution of the above equation yields 
\begin{eqnarray}
g(t; p) = c_{p} e^{g^{\prime}(0; p) t}
\end{eqnarray}
Again, due to the fact that  $g(0; p) = 1$, we have $c_{p} = 1$. Finally, I have obtained $g(t; p)$ as 
\begin{eqnarray}
g(t; p) = e^{g^{\prime}(0; p) t}
\end{eqnarray}
This equation relates the value of function at any time $t \ge 0$ with the derivative of the function at time $t = 0$ via an exponential map, and is thus analogous to the relation between Lie group and Lie algebra. In this sense, $g(t; p)$ is analogous to a single parameter Lie group, and $g^{\prime}(0; p)$ is analogous to its Lie algebra. We can thus call $g^{\prime}(0; p)$ the generator for the counting process. 
In order to find an explicit solution to Equ. [\ref{density_convolution}], I need to introduce one more restriction condition, which will be the topic of next subsection. 

\subsection{Solution of the general integral equation for one specific case}
Remember that 
\begin{eqnarray}
g(t; p) = \int_{0}^{\infty} h_{t}(x) e^{-px} dx
\end{eqnarray}
Thus, we have 
\begin{eqnarray}
g^{\prime}(t; p) = \int_{0}^{\infty} dx e^{-px} \frac{dh_{t}(x)}{dt}
\end{eqnarray}
Setting $t = 0$ gives 
\begin{eqnarray}
g^{\prime}(0; p) = \int_{0}^{\infty} dx e^{-px} \frac{dh_{t}(x)}{dt}\Bigg\vert_{t = 0}
\end{eqnarray}
If we make the assumption that for $t \rightarrow 0^{+}$, the probability follows this rule (this is the Poisson condition): 
\begin{eqnarray}
p(N_{t} = 0) &=& 1 - \lambda t, \\\nonumber
p(N_{t} = 1) &=& \lambda t, \\\nonumber
p(N_t \ge 2) &=& 0, 
\end{eqnarray}
then the probability density function for an infinitesimal $t$ is 
\begin{eqnarray}
h_{t}(x) = (1 - \lambda t)\delta(x) + \lambda t \delta(x - 1)
\end{eqnarray}
Thus, we have 
\begin{eqnarray}
g^{\prime}(0; p) &=& \int_{0}^{\infty} dx e^{-px} \frac{dh_{t}(x)}{dt}\Bigg\vert_{t = 0} \\\nonumber
&=& \int_{0}^{\infty} dx e^{-px} \Big( -\lambda \delta(x) + \lambda\delta(x - 1)\Big) \\\nonumber
&=& \lambda(-1 + e^{-p})
\end{eqnarray}
Finally, we have found the Laplace transformation of the probability density function at any time $t \ge 0$, which is 
\begin{eqnarray}
g(t; p) = e^{\lambda t(-1 + e^{-p})}
\end{eqnarray}
Comparison with the Laplace transform of the probability density function for Poisson distribution reveals that $h_{t}(x)$ is the probability density function for Poisson distribution with parameter $\lambda t$. 

\section{Conclusion}
\label{conclusion}
In this paper, I have derived two integral equations that are valid for any stationary stochastic counting process with independent increments. The first integral equation aims to calculate the probability of observing $n$ events during the time interval $[0, t]$, and the second integral equation aims to calculate the probability density function for a random variable $N_{t}$ which counts the number of events that occur within the time interval $[0, t]$. With the introduction of an extra condition, explicit solutions are found for each integral equation.  

\bibliography{ref}

\begin{thebibliography}{1}

\bibitem{feynman1948space}
R.~Feynman.
\newblock Space-time approach to non-relativistic quantum mechanics.
\newblock {\em Reviews of Modern Physics}, 20(2):367--387, 1948.

\bibitem{feynman2010quantum}
R.~P. Feynman, A.~R. Hibbs, and D.~F. Styer.
\newblock {\em Quantum mechanics and path integrals}.
\newblock Courier Corporation, 2010.

\bibitem{ross1996stochastic}
S.~M. Ross.
\newblock {\em Stochastic processes. 1996}.
\newblock Wiley, New York, 1996.

\end{thebibliography}
\bibliographystyle{abbrv}

\end{document}